\numberwithin{equation}{section}
\newtheorem{theorem}{Theorem}
\newtheorem{proposition}[theorem]{Proposition}
\theoremstyle{definition}
\newtheorem{definition}{Definition}
\numberwithin{theorem}{section}
\numberwithin{definition}{section}
\numberwithin{remark}{section}
\newcommand{\calS}{\mathcal{S}}
\newcommand{\geh}{\mathfrak{g}}
\newcommand{\la}{\lambda}
\newcommand{\La}{\Lambda}
\newcommand{\Lab}{\ol{\Lambda}}
\newcommand{\ol}{\overline}
\newcommand{\ot}{\otimes}
\newcommand{\pair}[1]{\langle{#1}\rangle}
\newcommand{\pr}{\text{pr}\;}
\newcommand{\veps}{\varepsilon}
\newcommand{\vphi}{\varphi}
\newcommand{\wt}{\mathrm{wt}}
\newcommand{\Z}{\mathbb{Z}}
\newcommand{\hdom}{{\begin{picture}(8,4)\multiput(0,0)(4,0){3}{\line(0,1){4}}
\multiput(0,0)(0,4){2}{\line(1,0){8}}\end{picture}}}
\newcommand{\vdom}{{\begin{picture}(4,8)\multiput(0,0)(0,4){3}{\line(1,0){4}}
\multiput(0,0)(4,0){2}{\line(0,1){8}}\end{picture}}}
\newcommand{\cell}{{\begin{picture}(4,4)\multiput(0,0)(4,0){2}{\line(0,1){4}}
\multiput(0,0)(0,4){2}{\line(1,0){4}}\end{picture}}}
\begin{document}

\title[Simplicity and similarity]{Simplicity and similarity of Kirillov-Reshetikhin crystals}

\author[M.~Okado]{Masato Okado}
\address{Department of Mathematical Science,
Graduate School of Engineering Science, Osaka University,
Toyonaka, Osaka 560-8531, Japan}
\email{okado@sigmath.es.osaka-u.ac.jp}

\date{December 3, 2012}

\begin{abstract}
We show that the Kirillov-Reshetikhin crystal $B^{r,s}$ for nonexceptional affine types is simple
and have the similarity property. As a corollary of the first fact we can derive that 
$B^{r_1,s_1}\ot\cdots\ot B^{r_l,s_l}$ is connected. Variations of the second property are also given.
\end{abstract}

\maketitle


\section{Introduction}
It is widely recognized that Kirillov-Reshetikhin modules \cite{KR:1990,Chari:2001,CH} have distinguished 
properties among finite-dimensional modules of quantum affine algebras. One of such properties is the 
so called $T$-systems \cite{KNS}, certain algebraic relations among their $q$-characters 
\cite{FR,Nakajima:2003,H:2007}. Another property is the conjectural existence of crystal basis 
\cite{HKOTY,HKOTT}. We call it Kirillov-Reshetikhin (KR) crystal, and denote it by $B^{r,s}$ since it is 
parametrized by $(r,s)\in I\setminus\{0\}\times\Z_{>0}$, where $I$ is the index set of the Dynkin diagram
of the affine algebra and $0$ is the index as prescribed in \cite{Kac}. In \cite{KMN2:1992} many KR crystals 
were constructed through the study of perfect crystals \cite{KMN1:1992}. 
The existence of $B^{r,1}$ for any affine type was shown in \cite{Ka:2002} and its combinatorial structure 
was clarified in \cite{NS:2006,HN}. Recently, the existence of KR crystals was settled in \cite{OS:2008} 
for all affine algebras of nonexceptional types.  The crystal structure of these KR crystals was clarified 
in \cite{FOS}. Using this result the conjecture on the perfectness 
\cite{HKOTY,HKOTT} was solved for nonexceptional types in \cite{FOS2}.

In this paper, we show two more important and naturally expected properties of KR crystals. The first one 
is the simplicity (Definition \ref{def:simple}, Theorem \ref{th:simple}) introduced in \cite{AK}. 
As a consequence, one can show
the tensor product of KR crystals $B^{r_1,s_1}\ot\cdots\ot B^{r_l,s_l}$ is connected. The second one is
the similarity (Theorem \ref{th:similarity}), which was first observed in \cite{Ka:1996} for the crystal 
$B(\la)$ of the irreducible highest weight module of highest weight $\la$. We also consider variations of 
the similarity in \S\ref{sec:var}. Combining (2-i) and (1-iii) (or (1-v)) one can settle Conjecture 6.6 in
\cite{OSS} on the alignedness of the virtual crystal.

Two remarks are in order. In September 2009 Sagaki informed the author that the simplicity can also be shown 
using the method in \cite{NS}. The connectedness of the two fold tensor product of perfect KR crystals was
shown in \cite{FSS} under certain assumptions, which were settled later in \cite{ST}.

\section{Review on KR crystals}

\subsection{Crystals}

We do not review the notion of crystal bases or crystals but refer the reader to \cite{Ka:1991,Ka:1994}.
In this paper crystal operators are denoted by $e_i,f_i$ instead of $\tilde{e}_i,\tilde{f}_i$ and 
we only consider seminormal crystals satisfying
\[
\veps_i(b) = \max\{k \in \Z_{\ge 0} \mid e_i^kb \neq 0 \},\quad
\vphi_i(b) = \max\{k \in \Z_{\ge 0} \mid f_i^kb \neq 0 \}
\]
for an element $b$ of a crystal $B$.
Throughout the paper $\geh$ stands for an affine Kac-Moody Lie algebra \cite{Kac} whose 
weight lattice, simple roots, simple coroots, fundamental weights are denoted by
$P,\{\alpha_i\}_{i\in I},\{h_i\}_{i\in I},\{\La_i\}_{i\in I}$ with index set $I$.
$\geh_0$ is the underlying finite-dimensional simple Lie algebra of $\geh$ whose index set is
$I_0:=I\setminus\{0\}$ following the convention of \cite{Kac}. $\ol{P},\{\alpha_i\}_{i\in I_0},
\{h_i\}_{i\in I_0},\{\Lab_i\}_{i\in I_0}$ are the weight lattice, simple roots, simple 
coroots, fundamental weights of $\geh_0$. $U_q(\geh),U_q(\geh_0)$ are the 
quantized enveloping algebras of $\geh,\geh_0$ and $U'_q(\geh)$ is the subalgebra of $U_q(\geh)$
without the degree operator $q^d$.

A crystal $B$ with index set $I$ is said to be {\it regular} if, for any subset $J\subset I$,
$B$ is isomorphic to the crystal bases associated to an integrable $U_q(\geh_J)$-module, where
$U_q(\geh_J)$ is the subalgebra of $U_q(\geh)$ generated by the Chevalley generators with index set $J$.
It is known \cite{Ka:1994} that the Weyl group $W$ acts on any regular crystal. Let $\{s_i\}_{i\in I}$ 
be the simple reflections of $W$. The action $\calS$ is given by
\[
\calS_{s_i}b=\left\{
\begin{array}{ll}
f_i^{\pair{h_i,\wt(b)}}b\quad&\text{if }\pair{h_i,\wt(b)}\ge0,\\
e_i^{-\pair{h_i,\wt(b)}}b\quad&\text{if }\pair{h_i,\wt(b)}\le0.
\end{array}\right.
\]

Let a crystal $B$ with index set $I$ be given and $J \subset I$. Then we can regard $B$ as a $J$-crystal by
only considering $e_i,f_i$ for $i\in J$. We also say an element $b$ is $J$-{\it highest} 
(resp. $J$-{\it lowest}) if $e_ib=0$ (resp. $f_ib=0$) for all $i\in J$.

\subsection{$X_n\to X_{n-1}$ branching and $\pm$-diagrams} \label{subsec:branching}

Let $X_n=B_n,C_n$, or $D_n$. In this subsection we review from \cite[\S3.2]{FOS} the branching rule for 
$X_n\to X_{n-1}$ involving $\pm$-diagrams introduced in \cite{Sch:2008}.
A $\pm$-diagram $P$ of shape $\La/\la$ is a sequence of partitions $\la\subset \mu \subset \La$ 
such that $\La/\mu$ and $\mu/\la$ are horizontal strips (i.e. every column contains at most one box). We
depict this $\pm$-diagram by the skew tableau of shape $\La/\la$ in
which the cells of $\mu/\la$ are filled with the symbol $+$ and
those of $\La/\mu$ are filled with the symbol $-$. Write
$\La=\mathrm{outer}(P)$ and $\la=\mathrm{inner}(P)$ for the outer and inner shapes of the
$\pm$-diagram $P$. 
When drawing partitions or tableaux, we use the French
convention where the parts are drawn in increasing order from top to bottom.

There are a couple further type-specific requirements:
\begin{enumerate}
\item
For type $C_n$ the outer shape $\La$ contains columns of height at most $n$, but the inner 
shape $\lambda$ is not allowed to be of height $n$ (hence there are no empty columns of height $n$).
\item
For type $B_n$ the outer shape $\La$ contains columns of height at most $n$; for the columns
of height $n$, the $\pm$-diagram can contain at most one $0$ between $+$ and $-$ at height $n$
and no empty columns are allowed; furthermore there may be a spin column of height $n$ and 
width $1/2$ containing $+$ or $-$.
\item
For type $D_n$ suppose $\La= k_1 \La_1 + \cdots + k_{n-1} \La_{n-1} + k_n \La_n$.
If $k_n\ge k_{n-1}$ we depict this weight by $(k_n-k_{n-1})/2$ columns of height $n$ colored 1
(where we interpret a $1/2$ column as a $\La_n$ spin column if $k_n-k_{n-1}$ is odd),
$k_{n-1}$ columns of height $n-1$, and as usual $k_i$ columns of height $i$ for $1\le i\le n-2$.
If $k_n< k_{n-1}$ we depict this weight by $(k_{n-1}-k_n)/2$ columns of height $n$ colored 2
(where we interpret a $1/2$ column as a $\La_{n-1}$ spin column if $k_{n-1}-k_n$ is odd),
$k_n$ columns of height $n-1$, and as usual $k_i$ columns of height $i$ for $1\le i\le n-2$.
We require that columns of height $n$ are colored, contain $+$, $-$, or $\mp$,
but cannot simultaneously contain $+$ and $-$; spin columns can only contain $+$ or $-$.
\end{enumerate}
Then, for an $X_n$-dominant weight $\La$, there is an isomorphism of $X_{n-1}$-crystals
\begin{align*}
  B_{X_n}(\La) \cong \bigoplus_{\substack{\text{$\pm$-diagrams $P$} \\ \mathrm{outer}(P)=\La}}
  B_{X_{n-1}}(\mathrm{inner}(P)).
\end{align*}

In fact, there is a bijection $\Phi:P\mapsto b$ from $\pm$-diagrams $P$ of shape 
$\La/\la$ to the set of $X_{n-1}$-highest elements $b$ of $X_{n-1}$-weight $\la$ as given below.  
We use Kashiwara-Nakashima (KN) tableaux \cite{KN:1994} in French convention to represent 
elements of $B_{X_n}(\La)$.
Suppose $\La$ is a dominant weight; we require that $\La$ does not contain any 
columns of height $n$ for type $D_n$. For any columns of height $n$ containing $+$, place a column
$12\ldots n$ (this includes spin columns for type $B_n$). Otherwise, place $\ol{1}$ in all positions in 
$P$ that contain a $-$, place a $0$ in the position containing $0$, and fill the remainder of all 
columns by strings of the form $23\ldots k$. We move through the columns of $b$  from top to bottom,
left to right. Each $+$ in $P$ (starting with the leftmost moving to the right ignoring $+$ at height $n$)
will alter $b$ as we move through the columns. Suppose the $+$ is at height $h$ in $P$.
If one encounters a spin column of type $B_n$, replace it by a column 
$12\ldots h \; h+2\ldots n\; \ol{h+1}$ (read from bottom to top). Otherwise, if one encounters a 
$\ol{1}$, replace $\ol{1}$ by $\ol{h+1}$. If one encounters a $2$, replace the string $23\ldots k$ by 
$12\ldots h\; h+2\ldots k$.

\subsection{KR crystals}

KR crystals $B^{r,s}$ for nonexceptional types have the following general features: As an $I_0$-crystal
$B^{r,s}$ decomposes into 
\begin{equation} \label{decomp}
B^{r,s}\simeq\bigoplus_\la B(\la),
\end{equation}
where $B(\la)$ stands for the crystal of the highest weight $U_q(\geh_0)$-module of highest weight $\la$
and the $\la$ runs over all partitions that can be obtained from the $r\times s$ (or $r\times(s/2)$ only
when $\geh=B^{(1)}_n$ and $r=n$) rectangle by removing pieces of shape $\nu$ ($(\nu,\geh_0)$ are given
in Table \ref{tab:Dynkin}). However, there are some exceptions, where $B^{r,s}$ is connected and does not
decomposes as in \eqref{decomp}. We call these nodes {\it exceptional} and they are the filled nodes in
Table \ref{tab:Dynkin}.

\begin{table}
\begin{eqnarray*}
A_{n-1}^{(1)}
&\vcenter{\xymatrix@R=1ex{
&&*{\circ}<3pt> \ar@{-}[drr]^<{\;\,0} \ar@{-}[dll] \\
*{\circ}<3pt> \ar@{-}[r]_<{1} &*{\circ}<3pt> \ar@{-}[r]_<{2} 
&{} \ar@{.}[r]&{} \ar@{-}[r] &*{\circ}<3pt> \ar@{}[r]_<{n-1} &{}}}& (\phi,A_{n-1}) \\
B_n^{(1)}
&\vcenter{\xymatrix@R=1ex{
*{\circ}<3pt> \ar@{-}[dr]^<{0} \\
& *{\circ}<3pt> \ar@{-}[r]_<{2} 
& {} \ar@{.}[r]&{}  \ar@{-}[r]_>{\,\,\,\,n-1} &
*{\circ}<3pt> \ar@{=}[r] |-{\scalebox{2}{\object@{>}}}& *{\circ}<3pt>\ar@{}_<{n} \\
*{\circ}<3pt> \ar@{-}[ur]_<{1}}}& (\vdom,B_n) \\
C_n^{(1)}
&\vcenter{\xymatrix@R=1ex{
*{\circ}<3pt> \ar@{=}[r] |-{\scalebox{2}{\object@{>}}}_<{0} 
&*{\circ}<3pt> \ar@{-}[r]_<{1} 
& {} \ar@{.}[r]&{}  \ar@{-}[r]_>{\,\,\,\,n-1} &
*{\circ}<3pt> \ar@{=}[r] |-{\scalebox{2}{\object@{<}}}
& *{\bullet}<3pt>\ar@{}_<{n}}}& (\hdom,C_n) \\
D_n^{(1)}
&\vcenter{\xymatrix@R=1ex{
*{\circ}<3pt> \ar@{-}[dr]^<{0}&&&&&*{\bullet}<3pt> \ar@{-}[dl]^<{n-1}\\
& *{\circ}<3pt> \ar@{-}[r]_<{2} 
& {} \ar@{.}[r]&{} \ar@{-}[r]_>{\,\,\,n-2} &
*{\circ}<3pt> & \\
*{\circ}<3pt> \ar@{-}[ur]_<{1}&&&&&
*{\bullet}<3pt> \ar@{-}[ul]_<{n}}}& (\vdom,D_n) \\
A_{2n}^{(2)}
&\vcenter{\xymatrix@R=1ex{
*{\circ}<3pt> \ar@{=}[r] |-{\scalebox{2}{\object@{<}}}_<{0} 
&*{\circ}<3pt> \ar@{-}[r]_<{1} 
& {} \ar@{.}[r]&{}  \ar@{-}[r]_>{\,\,\,\,n-1} &
*{\circ}<3pt> \ar@{=}[r] |-{\scalebox{2}{\object@{<}}}
& *{\circ}<3pt>\ar@{}_<{n}}}& (\cell,C_n) \\
A_{2n-1}^{(2)}
&\vcenter{\xymatrix@R=1ex{
*{\circ}<3pt> \ar@{-}[dr]^<{0} \\
& *{\circ}<3pt> \ar@{-}[r]_<{2} 
& {} \ar@{.}[r]&{}  \ar@{-}[r]_>{\,\,\,\,n-1} &
*{\circ}<3pt> \ar@{=}[r] |-{\scalebox{2}{\object@{<}}}& *{\circ}<3pt>\ar@{}_<{n} \\
*{\circ}<3pt> \ar@{-}[ur]_<{1}}}& (\vdom,C_n) \\
D_{n+1}^{(2)}
&\vcenter{\xymatrix@R=1ex{
*{\circ}<3pt> \ar@{=}[r] |-{\scalebox{2}{\object@{<}}}_<{0} 
&*{\circ}<3pt> \ar@{-}[r]_<{1} 
& {} \ar@{.}[r]&{}  \ar@{-}[r]_>{\,\,\,\,n-1} &
*{\circ}<3pt> \ar@{=}[r] |-{\scalebox{2}{\object@{>}}}
& *{\bullet}<3pt>\ar@{}_<{n}}}& (\cell,B_n)
\end{eqnarray*}
\caption{\label{tab:Dynkin}Dynkin diagrams}
\end{table}

In this subsection we review from \cite{FOS} all KR crystals for nonexceptional affine types by dividing 
into cases according to the shape $\nu$ first and then treat the exceptional nodes.

\subsubsection{Type $A_{n-1}^{(1)}$}
\label{subsubsec:A}
Since
\[
	B^{r,s} \cong B(s\Lab_r)
\]
as an $I_0$-crystal $B^{r,s}$ is identified with the set of all semistandard Young tableaux of 
rectangular shape $(s^r)$ over the alphabet $1\prec 2\prec \cdots \prec n$. 
The Dynkin diagram of $A_{n-1}^{(1)}$ has a 
cyclic automorphism $i\mapsto i+1 \pmod{n}$. The action of $e_0$ and $f_0$ is given by
\[
	e_0 = \pr^{-1} \circ e_1 \circ \pr \qquad \text{and} \qquad f_0 = \pr^{-1} \circ f_1 \circ \pr,
\]
where $\pr$ is Sch\"utzenberger's promotion operator~\cite{Schuetzenberger:1972}, which
is the cyclic Dynkin diagram automorphism on the level of crystals~\cite{Sh:2002}.
On a rectangular tableau $b \in B^{r,s}$, $\pr(b)$ is obtained from $b$ by removing all letters $n$,
adding one to each letter in the remaining tableau, using jeu-de-taquin to slide all letters up,
and finally filling the holes with $1$s.

\subsubsection{Type $B_n^{(1)}, D_n^{(1)}, A_{2n-1}^{(2)}$}
\label{subsubsec:BDA}
The cases when $r=n-1,n$ for $\geh=D_n^{(1)}$ are excluded from here since they are exceptional nodes.
As an $I_0$-crystal
\[
B^{r,s}\simeq\bigoplus_\la B(\la),
\]
where $\la$ runs over all partitions obtained from the $r\times s$ (or $r\times(s/2)$ only
when $\geh=B^{(1)}_n$ and $r=n$) rectangle by removing $\vdom$.

The Dynkin diagrams in this case all have an automorphism interchanging nodes $0$ and $1$. 
The corresponding automorphism $\sigma$ of on the level of crystals exists.
By construction $\sigma$ commutes with $e_{i}$ and $f_{i}$ for $i=2,3,\ldots,n$. 
Hence it suffices to define $\sigma$ on $\{2,3,\ldots,n\}$-highest elements. 

Because of the bijection $\Phi$ described in \S\ref{subsec:branching}, it remains to define $\sigma$ on 
$\pm$-diagrams.
For the following description of the map $\mathfrak{S}$, we further assume $r\neq n$ for $B_n^{(1)}$.
Let $P$ be a $\pm$-diagram of shape $\La/\la$. Let $c_i=c_i(\la)$ be
the number of columns of height $i$ in $\la$ for all $1\le i<r$ with
$c_0=s-\la_1$. If $i\equiv r-1 \pmod{2}$, then in $P$, above each
column of $\la$ of height $i$, there must be a $+$ or a $-$.
Interchange the number of such $+$ and $-$ symbols. If $i\equiv r
\pmod{2}$, then in $P$, above each column of $\la$ of height $i$,
either there are no signs or a $\mp$ pair. Suppose there are $p_i$
$\mp$ pairs above the columns of height $i$. Change this to
$(c_i-p_i)$ $\mp$ pairs. The result is $\mathfrak{S}(P)$, which has the
same inner shape $\la$ as $P$ but a possibly different outer shape.

The affine crystal operators $e_{0}$ and $f_{0}$ are defined as
\[
e_{0} = \sigma \circ e_{1} \circ \sigma \qquad \text{and} \qquad
f_{0} = \sigma \circ f_{1} \circ \sigma.
\]

The remaining KR crystal $B^{n,s}$ for $B_n^{(1)}$ is constructed as follows.
Let $\hat{B}^{n,s}$ be the $A_{2n-1}^{(2)}$-KR crystal. Then there exists a regular $B_n^{(1)}$-crystal
$B^{n,s}$ and a unique injective map $S:B^{n,s}\rightarrow \hat{B}^{n,s}$ such that 
\begin{alignat*}{2}
S(e_ib)&=e_i^{m_i}S(b), & S(f_ib)&=f_i^{m_i}S(b), \\
\veps_i(S(b))&=m_i\veps_i(b), & \quad \vphi_i(S(b))&=m_i\vphi_i(b)
\end{alignat*}
for $i\in I$, where $(m_i)_{0\le i\le n}=(2,2,\ldots,2,1)$.

\subsubsection{Type $C_n^{(1)}$}
\label{subsubsec:C}

The case when $r=n$ is excluded from here since it is an exceptional node.
We realize $B^{r,s}$ inside the ambient crystal 
$\hat{B}^{r,s}$ of type $A_{2n+1}^{(2)}$. Let $I=\{0,1,\ldots,n\}$ 
be the index set for type $C_n^{(1)}$ and $\hat{I}=\{0,1,\ldots,n+1\}$ be that for $A_{2n+1}^{(2)}$.  
Denote the crystal operators of $\hat{B}^{r,s}$ by $\hat{e}_i$ and $\hat{f}_i$. Then the crystal
with crystal operators defined by
\[
	e_i = \begin{cases} \hat{e}_0\hat{e}_1 & \text{if $i=0$}\\
	                                    \hat{e}_{i+1} & \text{if $1\le i\le n$}
	           \end{cases}
\quad \text{and} \quad
	 f_i = \begin{cases} \hat{f}_0\hat{f}_1 & \text{if $i=0$}\\
	                                    \hat{f}_{i+1} & \text{if $1\le i\le n$.}
	           \end{cases}
\]
turns out a regular $C_n^{(1)}$-crystal $B^{r,s}$.

\subsubsection{Type $A_{2n}^{(2)},D_{n+1}^{(2)}$} 
\label{subsubsec:A(2)D(2)}

The case when $r=n$ for is excluded from here since it is an exceptional node.
Let $\hat{B}^{r,s}$ stand for the $C_n^{(1)}$-KR crystal (The $r=n$ case is treated in 
\S\ref{subsubsec:CD(2)excep}). 
According to types $\geh=A_{2n}^{(2)},D_{n+1}^{(2)}$ define positive integers $m_i$ for $i\in I$ as
\[
(m_0,m_1,\ldots,m_{n-1},m_n) = \begin{cases}
       (1,2,\ldots,2,2) & \text{ for }A_{2n}^{(2)},\\
       (1,2,\ldots,2,1) & \text{ for }D_{n+1}^{(2)}.
\end{cases}
\]
Then there exists a regular $\geh$-crystal $B^{r,s}$ and a unique injective map
$S:B^{r,s}\longrightarrow \hat{B}^{r,2s}$ such that
\begin{alignat*}{2}
S(e_ib)&=e_i^{m_i}S(b), & S(f_ib)&=f_i^{m_i}S(b), \\
\veps_i(S(b))&=m_i\veps_i(b), & \quad \vphi_i(S(b))&=m_i\vphi_i(b)
\end{alignat*}
for $i\in I$.

\subsection{KR crystals for exceptional nodes}
\label{subsec:excep}

The KR crystal $B^{r,s}$ corresponding to an exceptional node $r$ is isomorphic to
$B(s\Lab_r)$ as an $I_0$-crystal.

\subsubsection{$B^{n,s}$ of type $C_n^{(1)},D_{n+1}^{(2)}$}
\label{subsubsec:CD(2)excep}

First consider type $C_n^{(1)}$. The elements in $B(s\Lab_n)$ are KN-tableaux of shape 
$(s^n)$. Recall from \S\ref{subsec:branching}, 
that the $J$-highest elements ($J=\{2,3,\ldots,n\}$) of shape $(s^n)$ are in bijection with $\pm$-diagrams.
Since all columns are of height $n$, each column is either filled with $+$, $-$, or $\mp$.
Hence, if there are $\ell_1$ columns containing $+$, $\ell_2$ columns containing $-$, and 
$\ell_3$ columns containing $\mp$, we may identify $\pm$-diagrams $P$ with triples 
$(\ell_1, \ell_2, \ell_3)$ such that $\ell_1 + \ell_2 + \ell_3 = s$ and $\ell_1,\ell_2,\ell_3\ge 0$. 

In order to describe the affine structure, it suffices to define $e_0,f_0$ on such triples, since 
they commute with $e_i,f_i$ for $i=2,\ldots,n$. The rule is given by
\begin{equation*}
\begin{split}
	e_0(\ell_1,\ell_2,\ell_3) &= \begin{cases}
	(\ell_1 - 1, \ell_2 + 1, \ell_3) & \text{if $\ell_1>0$,}\\
	0 & \text{otherwise,}
	\end{cases}\\
	f_0(\ell_1,\ell_2,\ell_3) &= \begin{cases}
	(\ell_1 + 1, \ell_2 - 1, \ell_3) & \text{if $\ell_2>0$,}\\
	0 & \text{otherwise.}
	\end{cases}
\end{split}
\end{equation*}

Next consider type $D_{n+1}^{(2)}$ whose classical subalgebra is of type $B_n$.
The elements in $B(s\Lab_n)$
are KN-tableaux of shape $((s/2)^n)$ when $s$ is even and of shape $(((s-1)/2)^n)$ plus an extra
spin column when $s$ is odd. By \S\ref{subsec:branching}, the $J$-highest 
elements are in bijection with $\pm$-diagrams, where columns of height $n$ can contain
$+$, $-$, $\mp$ and at most one $0$; the spin column of half width can contain $+$ or $-$.

We again encode a $\pm$-diagram $P$ as a triple $(\ell_1, \ell_2, \ell_3)$,
where $\ell_1$ is twice the number of columns containing a single $+$ sign, $\ell_2$ is twice 
the number of columns containing a single $-$ sign (where spin column are counted as $1/2$ 
columns), and $\ell_3$ is twice the number of columns containing $\mp$.
If $P$ contains a $0$-column, then $\ell_1 + \ell_2 + \ell_3 =s-2$, otherwise  
$\ell_1 + \ell_2 + \ell_3 =s$.

The action of $e_0,f_0$ on $J$-highest elements in this case is given by
\begin{equation*}
\begin{split}
	e_0(\ell_1,\ell_2,\ell_3) &= \begin{cases}
	(\ell_1, \ell_2 + 2, \ell_3) & \text{if $\ell_1 + \ell_2 + \ell_3 < s$,}\\
	(\ell_1-2, \ell_2, \ell_3) & \text{if $\ell_1 + \ell_2 + \ell_3 = s$ and $\ell_1 > 1$,}\\
	(0, \ell_2+1, \ell_3) & \text{if $\ell_1 + \ell_2 + \ell_3 = s$ and $\ell_1 = 1$,}\\
	0 & \text{if $\ell_1 + \ell_2 + \ell_3 = s$ and $\ell_1=0$,}
	\end{cases}\\
	f_0(\ell_1,\ell_2,\ell_3) &= \begin{cases}
	(\ell_1+2, \ell_2, \ell_3) & \text{if $\ell_1 + \ell_2 + \ell_3 < s$,}\\
	(\ell_1, \ell_2-2, \ell_3) & \text{if $\ell_1 + \ell_2 + \ell_3 = s$ and $\ell_2 > 1$,}\\
	(\ell_1+1, 0, \ell_3) & \text{if $\ell_1 + \ell_2 + \ell_3 = s$ and $\ell_2 = 1$,}\\
	0 & \text{if $\ell_1 + \ell_2 + \ell_3 = s$ and $\ell_2=0$.}
	\end{cases}
\end{split}
\end{equation*}

\subsubsection{$B^{n,s}$ and $B^{n-1,s}$ of type $D_n^{(1)}$}
\label{subsubsec:Dexcep}

We first introduce an involution $\sigma:B^{n,s} \leftrightarrow B^{n-1,s}$ corresponding to the 
Dynkin diagram automorphism that interchanges the nodes $0$ and $1$.
Under this involution, $J$-components ($J=\{2,3,\ldots,n\}$) need to be mapped to 
$J$-components. Hence it suffices to define $\sigma$ on 
$J$-highest elements or equivalently $\pm$-diagrams.
Recall from \S\ref{subsec:branching}, that for weights $\La=s\Lab_n$ or $s\Lab_{n-1}$,
the $\pm$-diagram can contain columns with $+$ and $\mp$ or with $-$ and $\mp$ (but not a 
mix of $-$ and $+$ columns). 
The involution $\sigma:B^{n,s}$ maps a $\pm$-diagram $P$
to a $\pm$-diagram $P'$ of opposite color where columns containing $+$ are interchanged
with columns containing $-$ and vice versa. Then the action of $e_0$ and $f_0$ is given by
\begin{equation*}
	e_0 = \sigma \circ e_1 \circ \sigma \qquad \text{and} \qquad f_0 = \sigma \circ f_1 \circ \sigma.
\end{equation*}

\section{Simplicity}

In this section we review the notion of simple crystal from \cite{AK} and show the KR crystal $B^{r,s}$ 
simple. Recall $W$ is the Weyl group. We say that an element $b$ of a regular crystal $B$ is {\it extremal}
if it satisfies the following conditions:
we can find elements $\{b_w\}_{w\in W}$ such that
\begin{align*}
&b_w=b\quad\text{ for }w=e;\\
&\text{if }\pair{h_i,w\la}\ge0,\text{ then }e_ib_w=0\text{ and }f_i^{\pair{h_i,w\la}}b_w=b_{s_iw};\\
&\text{if }\pair{h_i,w\la}\le0,\text{ then }f_ib_w=0\text{ and }e_i^{-\pair{h_i,w\la}}b_w=b_{s_iw}.
\end{align*}

\begin{definition} \label{def:simple}
We say a finite regular crystal $B$ is {\it simple} if $B$ satisfies the following:
\begin{itemize}
\item[(1)] there exists $\la\in P$ such that the weight of any extremal element of $B$ is 
contained in $W\la$;
\item[(2)] $\sharp(B_\la)=1$.
\end{itemize}
\end{definition}

Then it was shown \cite{AK} that simple cyrstals have the following properties.

\begin{proposition} 
\begin{itemize}
\item[(1)] A simple crystal is connected.
\item[(2)] The tensor product of simple crystals is also simple.
\end{itemize}
\end{proposition}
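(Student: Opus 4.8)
The plan is to prove the two assertions separately, using throughout that $B$ is regular, so the Weyl group $W$ acts by $\calS$, and that each $\calS_{s_i}$ is a power of $e_i$ or of $f_i$; consequently every $\calS_w$ permutes the connected components of $B$ and satisfies $\wt(\calS_w b)=w\,\wt(b)$.

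For (1), the key lemma I would establish is that every connected component $C$ of the finite regular crystal $B$ contains an extremal element. To produce one, I would take $b\in C$ whose weight is a vertex of the convex hull of $\wt(C)$ (equivalently, maximize a generic linear functional over $C$); the string property shows $b$ sits at the end of each $i$-string, and applying $\calS_w$ one checks that the family $\{\calS_w b\}_{w\in W}$ witnesses extremality. Granting this, hypothesis (1) of Definition \ref{def:simple} gives $\wt(b)\in W\la$, so some $w\in W$ satisfies $w\,\wt(b)=\la$; then $\calS_w b\in C$ has weight $\la$. Hypothesis (2) says $B$ has a unique element of weight $\la$, which therefore lies in every component simultaneously, forcing $B$ to be connected. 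The delicate part here is checking extremality at the affine node $0$ (not merely at the finite nodes), which I would handle by the vertex/string analysis just described.

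For (2), let $B,B'$ be simple with extremal weights $\la,\la'$, write $\la^+,\la'^+$ for their classically dominant representatives, and set $\La:=\la^++\la'^+$ with $u,u'$ the corresponding highest-weight vectors. I would first verify hypothesis (2) for $B\ot B'$: if $x\ot y$ has weight $\La$, then since every weight of $B$ is dominated by $\la^+$ (attained only at $u$) and likewise for $B'$, the identity $\wt(x)+\wt(y)=\La$ forces $\wt(x)=\la^+$ and $\wt(y)=\la'^+$; as $\sharp B_{\la^+}=\sharp B'_{\la'^+}=1$ this gives $x\ot y=u\ot u'$, so $\sharp(B\ot B')_\La=1$.

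It remains to check hypothesis (1) for $B\ot B'$, and this is the step I expect to be the main obstacle. Let $c\in B\ot B'$ be extremal with family $\{c_w\}$, and choose $w_0$ so that $w_0\,\wt(c)$ is classically dominant; then $c_{w_0}$ is annihilated by $e_i$ for all $i\in I_0$. The crux is the lemma that an extremal vector of a tensor product factors as $c_{w_0}=b_1\ot b_2$ with $b_1,b_2$ extremal in $B,B'$ respectively and with a compatible (common) Weyl-group alignment; granting it, $b_1$ and $b_2$ are classically dominant extremal vectors, hence $b_1=u$ and $b_2=u'$ by simplicity, so $\wt(c_{w_0})=\La$ and $\wt(c)=w_0^{-1}\La\in W\La$. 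The difficulty is precisely this factorization-and-alignment, because $\calS$ on a tensor product is not the tensor product of the actions; I would prove it by induction on the length of $w_0$, tracking the action of $e_i,f_i$ across $\ot$ via the signature (tensor) rule and using that extremality keeps each tensor factor at the top or bottom of every $i$-string after each application of $\calS_{s_i}$.
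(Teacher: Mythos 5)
First, a point of comparison: the paper does not prove this Proposition at all --- it is quoted from Akasaka--Kashiwara \cite{AK} --- so your attempt must be judged against that source rather than against an argument in the text. Your part (1) is correct and is essentially the standard argument: the convex hull of $\wt(C)$ is $W$-stable, an element whose weight is a vertex of it cannot lie in the interior of any $i$-string, this property propagates along the family $\{\calS_w b\}$, and then conditions (1) and (2) of Definition \ref{def:simple} force every component to contain the unique element of weight $\la$. Your verification of condition (2) for $B\ot B'$ is also fine (the domination claim follows from the same vertex lemma).

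The verification of condition (1) for $B\ot B'$, however, has a genuine gap at exactly the step you flag, and the problem is not merely that your factorization lemma is unproved: even granting it, your conclusion does not follow. From $e_i c_{w_0}=e_i(b_1\ot b_2)=0$ for $i\in I_0$ the tensor product rule gives $e_ib_1=0$ and $\veps_i(b_2)\le\vphi_i(b_1)$; it does \emph{not} give $e_ib_2=0$. So $b_1$ has classically dominant weight, but nothing forces $b_2$ to, and ``$b_2$ extremal with $\wt(b_2)\in W\la'$'' only yields $\wt(c_{w_0})\in \la^{+}+W\la'$, which is strictly larger than $W\La$. A concrete illustration in type $A_1^{(1)}$: in $B^{1,1}\ot B^{1,1}$ the element $1\ot 2$ has classically dominant weight $0$, is killed by $e_1$ and $f_1$, and its factors are both extremal in the simple crystal $B^{1,1}$; it fails to be extremal only because $e_0(1\ot 2)=2\ot 2\ne 0$. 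Your argument never brings the node $0$ (equivalently, the translation part of the affine Weyl group acting on level-zero weights) to bear at this point, and without it nothing excludes such elements. A second, related defect is that the ``common Weyl-group alignment'' cannot hold in the naive form you need: in the situation above one computes $\calS_{s_i}(b_1\ot b_2)=f_i^{\vphi_i(b_1)-\veps_i(b_2)}b_1\ot f_i^{\vphi_i(b_2)}b_2$, which differs from $\calS_{s_i}b_1\ot\calS_{s_i}b_2$ unless $\veps_i(b_2)=0$. The proof in \cite{AK} has to use the full affine extremality of $c_{w_0}$ --- in particular $f_0c_{w_0}=0$ and the behaviour of the $e_0$-string --- to force $\veps_i(b_2)=0$ for all $i\in I_0$ and hence $b_2=u'$; that is the missing ingredient.
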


We show the first main theorem of this paper.

\begin{theorem} \label{th:simple}
The KR crystal $B^{r,s}$ of nonexceptional affine types is simple.
\end{theorem}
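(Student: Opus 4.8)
The goal is to show $B^{r,s}$ is simple, meaning two conditions from the definition.

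Let me think about this carefully.The plan is to verify the two conditions of Definition~\ref{def:simple} directly, exploiting the explicit decomposition \eqref{decomp} of $B^{r,s}$ as an $I_0$-crystal. First I would identify the candidate weight $\la\in P$. Since $B^{r,s}$ decomposes into a direct sum $\bigoplus_\mu B(\mu)$ of $U_q(\geh_0)$-highest weight crystals, the natural guess is that the relevant $W$-orbit is governed by the maximal component $B(s\Lab_r)$ (the full $r\times s$ rectangle), which sits at the top in dominance order. I would take $\la$ to be the corresponding $\geh$-weight, namely an appropriate affinization $s\La_r-(\text{level shift})\delta$ whose classical part is $s\Lab_r$; the exact coefficient of $\delta$ is fixed by requiring $\la\in P$ and is unimportant for the orbit structure since $\delta$ is $W$-invariant.

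The main work is condition (1): every extremal element must have weight in $W\la$. An extremal element $b$ is in particular $I_0$-extremal, and because $B^{r,s}$ is a regular crystal its $I_0$-restriction is a direct sum of $\geh_0$-crystals; an $I_0$-extremal vector of $B(\mu)$ must lie in the $W_0$-orbit of the extreme weight $\mu$ or $w_0\mu$, so $\wt(b)$ is $\geh_0$-conjugate to one of the $\mu$ appearing in \eqref{decomp}. The key point is then to use the \emph{affine} extremality conditions, in particular the requirement involving $h_0$ (and $e_0,f_0$), to rule out every $\mu\neq s\Lab_r$. Concretely, for a component $B(\mu)$ with $\mu$ obtained from the rectangle by removing a nonempty piece of shape $\nu$, I would show that the $I_0$-extremal vectors it contributes cannot be completed to a full family $\{b_w\}_{w\in W}$ satisfying the $0$-node conditions: applying $s_0$ forces a transition that the explicit $e_0,f_0$ formulas of \S\ref{subsubsec:A}--\S\ref{subsubsec:Dexcep} do not permit for a proper subrectangle, because $e_0$/$f_0$ move the vector toward the maximal shape. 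This reduces, type by type, to a finite check against the affine action already recorded in the excerpt.

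To organize this type-by-type check efficiently, I would reduce the non-simply-laced and exceptional cases to simpler ones via the structural maps already provided. For type $A^{(1)}_{n-1}$, $e_0,f_0$ are conjugates of $e_1,f_1$ by the promotion operator $\pr$, so extremality is transparent and $\la=W$-translates a single rectangle. For types $B^{(1)}_n,D^{(1)}_n,A^{(2)}_{2n-1}$ the automorphism $\sigma$ interchanging nodes $0,1$ gives $e_0=\sigma e_1\sigma$, reducing the $0$-condition to the known $1$-condition. For $C^{(1)}_n$, $A^{(2)}_{2n}$, $D^{(2)}_{n+1}$, and the exceptional nodes, the injective similarity maps $S$ into an ambient crystal $\hat B$ intertwine $e_i,f_i$ with powers $e_i^{m_i},f_i^{m_i}$ and scale $\veps_i,\vphi_i$ by $m_i$; hence $S$ carries extremal elements to extremal elements and weights to weights, so simplicity of $B^{r,s}$ follows from simplicity of the already-treated ambient $\hat B$. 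The hardest part will be the exceptional nodes of $C^{(1)}_n$ and $D^{(2)}_{n+1}$ in \S\ref{subsubsec:CD(2)excep}, where the $I_0$-crystal is the single connected $B(s\Lab_n)$ but the weight multiplicities at the relevant extreme weights must be shown to equal one; there I would argue condition (2) by the explicit triple description $(\ell_1,\ell_2,\ell_3)$, showing that the $0$-weight space $B_\la$ is a single triple, and condition (1) by checking that the piecewise formulas for $e_0,f_0$ admit exactly one $W$-orbit of extremal triples.

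Condition (2), $\#(B_\la)=1$, I would verify as a weight-multiplicity statement: the weight $\la$ corresponds to the highest weight vector of the unique maximal component $B(s\Lab_r)$, whose $\la$-weight space is one-dimensional since $\la$ is an extreme (hence multiplicity-free) weight of that irreducible crystal, and no other summand $B(\mu)$ in \eqref{decomp} contributes to $B_\la$ because those $\mu$ are strictly smaller and their $W_0$-orbits do not meet $\la$. In the exceptional cases the same conclusion follows from the triple bookkeeping noted above. Assembling the per-type verifications then yields the theorem.
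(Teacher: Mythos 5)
Your overall reduction is the same as the paper's: pass to a classically dominant weight, note that an extremal element of such weight is $I_0$-highest, observe that this settles type $A_{n-1}^{(1)}$ and the exceptional nodes at once (there $B^{r,s}\cong B(s\Lab_r)$ as an $I_0$-crystal), and handle condition (2) by the multiplicity-freeness of the decomposition \eqref{decomp}. Those parts are fine, and your condition (2) argument is correct.

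The gap is in the core step, ruling out an $I_0$-highest element $b$ of weight $\mu\neq s\Lab_r$. You assert that ``applying $s_0$ forces a transition that the explicit $e_0,f_0$ formulas do not permit, because $e_0/f_0$ move the vector toward the maximal shape,'' but this is not the actual obstruction and, as stated, does not produce a contradiction: at the $I_0$-highest element $b$ itself one of $\veps_0(b),\vphi_0(b)$ may well vanish, so the extremality condition at node $0$ can be satisfied there, and $\calS_{s_0}b$ is a perfectly good element of the crystal. The correct mechanism, which your sketch never reaches, is to conjugate $b$ by a carefully chosen word in the \emph{classical} simple reflections --- in the paper, $b'=\calS_2\calS_1\cdot\calS_3\calS_2\cdots\calS_{k+1}\calS_k b$ for types $B_n^{(1)},D_n^{(1)},A_{2n-1}^{(2)}$ and $b'=\calS_1\calS_2\cdots\calS_k b$ for $C_n^{(1)}$, where $k$ is the minimal height with $s_k>0$ --- and then to compute, via the $\sigma$-involution on $\pm$-diagrams (resp.\ the ambient $A_{2n+1}^{(2)}$ tableaux), that $\veps_0(b')$ and $\vphi_0(b')$ are \emph{both} strictly positive. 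Since any member $b_w$ of an extremal family must satisfy $e_0b_w=0$ or $f_0b_w=0$, this is the contradiction; without identifying which conjugate to pass to and carrying out that computation, the argument is only an assertion. Separately, your proposed reduction of $C_n^{(1)}$ to its ambient crystal is flawed: the embedding $\iota:B^{r,s}_{C_n^{(1)}}\hookrightarrow\hat B^{r,s}_{A_{2n+1}^{(2)}}$ is a folding with $e_0=\hat e_0\hat e_1$ and a strictly larger index set, not a similarity map, so ``extremal maps to extremal'' does not follow (indeed, the compatibility of extremality with such virtual embeddings is essentially the alignedness statement the paper derives as a \emph{consequence} of this theorem); only $A_{2n}^{(2)}$ and $D_{n+1}^{(2)}$, whose maps $S$ preserve the index set and scale $\veps_i,\vphi_i$ by $m_i$, admit the painless transfer you describe, and the paper treats $C_n^{(1)}$ by a direct computation instead.
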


\begin{proof}
We can assume $\la$ in Definition \ref{def:simple} is classically dominant, namely, 
$\la\in\sum_{i\in I_0}\Z_{\ge0}\Lab_i$. Then an extremal element of weight $\la$ is necessarily
$I_0$-highest. Hence, the cases when $\geh=A_{n-1}^{(1)}$ or $r$ is an exceptional node are done.
We assume $r$ is a nonexceptional node and prove the theorem by showing any $I_0$-highest element
$b$ of weight $\la\ne s\Lab_r$ is not extremal.

Type $B_n^{(1)},D_n^{(1)},A_{2n-1}^{(2)}$: We assume $(\geh,r)\ne(B_n^{(1)},n)$ first.
Let $\la=\sum_{j\equiv r\text{ mod }2}$ $s_j\Lab_j$. When $r$ is even, set $s_0=s-\sum_js_j$.
Let $k=\min\{0\le j\le r\mid j\equiv r\text{ mod }2,s_j>0\}$. Consider the element 
$b'=\calS_2\calS_1\cdot \calS_3\calS_2\cdots \calS_{k+1}\calS_kb$. In the KN tableau representation
$b'$ differs from $b$ in that the rightmost $s_k$ columns have entries $2,3,\ldots,k+1$. By the rule 
of $\sigma$ given in \S\ref{subsubsec:BDA} $\sigma(b')$ is given as follows. The shape of $\sigma(b')$
differs from $b'$ in that the height of the rightmost $s_k$ columns is $k+2$. From the left there are
$\lfloor\frac{s_k}2\rfloor$ columns with entries $1,3,\ldots,h,\ol{2}$, there is a column with entries
$2,3,\ldots,h,\ol{2}$ if $s_k$ is odd, and in the other columns, entries are $2,3,\ldots,h,\ol{1}$, 
where $h$ is the height of the column. From this description of $\sigma(b')$ one finds that
$\veps_0(b')=\veps_1(\sigma(b'))=2s-s_k>0$ and $\vphi_0(b')=\vphi_1(\sigma(b'))=s_k>0$, thereby showing
that $b$ is not extremal.

The remaining case when $\geh=B^{(1)}_n$ and $r=n$ is clear by construction.

Type $C_n^{(1)}$: Let $\la=\sum_js_j\Lab_j$ and set $s_0=s-\sum_js_j$. 
Let $k=\min\{0\le j\le r\mid s_j>0\}$. Consider the element 
$b'=\calS_1\calS_2\cdots\calS_kb$. In the KN tableau representation
$b'$ differs from $b$ in that the rightmost $s_k$ columns have entries $2,3,\ldots,k+1$.
One calculates
\begin{equation} \label{wt0}
\pair{h_0,\wt(b)}=s_k-s.
\end{equation}

Now recall the inclusion $\iota:B^{r,s}\hookrightarrow\hat{B}^{r,s}$ in \S\ref{subsubsec:C} where
$\hat{B}^{r,s}$ is the ambient KR crystal of type $A_{2n+1}^{(2)}$. Since $\iota(b)$ is
$\{2,\ldots,n+1\}$-highest, it corresponds to a $\pm$-diagram described as follows: By inner height
we mean the height of the corresponding column of the inner shape. There are $s_h/2$ columns of inner
height $h$ with $\mp,\cdot$ each if $0\le h<r$ and $r-h$ is even, and with $+,-$ each if $r-h$ is odd.
In the KN tableau representation, viewing from left, there are columns with entries 
$1,2,\ldots,h,h+2,h+3,\ldots$ and possibly $\ol{h'+1}$ on top for some $h,h'>k$. After such columns
we encounter a column with $\ol{k+1}$ on top and/or with entries $1,2,\ldots,k,k+2,k+3,\ldots$.
Then the KN tableau representation of $\iota(b')$ differs from $\iota(b)$ by replacing the last description 
with $\ol{2}$ on top and/or with entries $2,3,\ldots$. Hence we obtain 
$\vphi_0(b')=\vphi_1(\iota(b'))=s_k/2$, and from \eqref{wt0}, $\veps_0(b')=s-s_k/2$. Since
$\veps_0(b'),\vphi_0(b')>0$, $b$ is not extremal.

Type $A_{2n}^{(2)},D_{n+1}^{(2)}$: It is clear from the previous case by construction.
\end{proof}

\section{Similarity}

Let $B(\la)$ be the crystal basis for the irreducible highest weight module with highest weight
$\la$ and let $m$ be a positive integer. In \cite{Ka:1996} Kashiwara showed the following.

\begin{theorem}$($\cite{Ka:1996}$)$ \label{th:Kashiwara}
There exists a unique injective map $S_m:B(\la)\longrightarrow B(m\la)$ satisfying 
\begin{alignat}{2}
&S_m(e_ib)=e_i^mS_m(b), & &S_m(f_ib)=f_i^mS_m(b), \label{sim1} \\
&\veps_i(S_m(b))=m\veps_i(b), & \quad &\vphi_i(S_m(b))=m\vphi_i(b) \label{sim2}
\end{alignat}
for $i\in I$ and $b\in B(\la)$. Here $S_m(0)$ is understood to be $0$.
\end{theorem}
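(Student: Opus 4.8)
The plan is to construct $S_m$ explicitly on a convenient realization of $B(\la)$ and then verify \eqref{sim1} and \eqref{sim2} directly, deducing injectivity and uniqueness at the end. Since $B(\la)$ is connected with highest weight element $u_\la$, I would first define $S_m$ on $u_\la$ by sending it to the highest weight element $u_{m\la}$ of $B(m\la)$, and then extend by the forced rule dictated by \eqref{sim1}: any $b\in B(\la)$ can be written as $b=f_{i_1}\cdots f_{i_k}u_\la$, so I set $S_m(b):=f_{i_1}^m\cdots f_{i_k}^m\,u_{m\la}$. The crux of the whole argument is that this is \emph{well defined}, i.e.\ independent of the chosen path of lowering operators from $u_\la$ to $b$.

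The cleanest way I know to establish well-definedness is to use an ambient model in which the comparison between $B(\la)$ and $B(m\la)$ is transparent. I would embed $B(m\la)$ into the $m$-fold tensor power $B(\la)^{\ot m}$ via the unique morphism sending $u_{m\la}\mapsto u_\la^{\ot m}$ (this exists because $u_\la^{\ot m}$ is a highest weight vector of weight $m\la$), and identify $S_m(b)$ with the image of the diagonal element $b^{\ot m}=b\ot\cdots\ot b$. The key computation is then purely combinatorial: using the tensor product rule for the crystal operators one checks that on the diagonal $b^{\ot m}$ the operator $f_i$ applied $m$ times returns $(f_ib)^{\ot m}$, and symmetrically for $e_i$, so that the diagonal sublattice is stable and matches the $m$-th powers of the root operators. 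Concretely, for $b^{\ot m}$ the signature rule gives $\veps_i(b^{\ot m})=m\,\veps_i(b)$ and $\vphi_i(b^{\ot m})=m\,\vphi_i(b)$, and the first $m$ successive applications of $f_i$ act on the rightmost tensor factor, the next $m$ on the next factor, and so on; after exactly $m$ applications one lands back on the diagonal at $(f_ib)^{\ot m}$. This simultaneously yields \eqref{sim1} and \eqref{sim2} and shows the assignment depends only on $b$, not on the path, resolving well-definedness.

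With $S_m$ in hand, injectivity is immediate because the weight map already separates the relevant structure: if $S_m(b)=S_m(b')$ then $b^{\ot m}=b'^{\ot m}$ in $B(\la)^{\ot m}$, forcing $b=b'$ by looking at any single tensor factor. Uniqueness follows from connectedness of $B(\la)$ together with \eqref{sim1}: any map satisfying \eqref{sim1} is completely determined by its value on $u_\la$, and \eqref{sim2} pins down $S_m(u_\la)$ to be the unique element of $B(m\la)$ killed by every $e_i$, namely $u_{m\la}$. The step I expect to be the main obstacle is the tensor-product signature computation showing that the diagonal is preserved and that $f_i^m$ acts diagonally as $(f_i)^{\ot m}$; the bookkeeping of how the $\veps_i,\vphi_i$ contributions from the $m$ identical factors combine under the tensor rule is where all the real content sits, and one must handle carefully the boundary case where a single application of $f_i$ straddles the transition between consecutive tensor factors.
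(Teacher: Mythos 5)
First, a point of comparison: the paper does not prove this statement at all --- it is quoted from Kashiwara's \emph{Similarity of crystal bases} \cite{Ka:1996} and used as a black box --- so there is no internal proof to measure you against; your attempt has to stand on its own. It does not, because the step you yourself identify as carrying ``all the real content'' is false. The signature rule does \emph{not} give $\veps_i(b^{\ot m})=m\veps_i(b)$ on the diagonal: already for $m=2$ one has
\[
\veps_i(b\ot b)=\max\bigl(\veps_i(b),\,\veps_i(b)-\pair{h_i,\wt(b)}\bigr)
=\veps_i(b)+\max\bigl(0,\,\veps_i(b)-\vphi_i(b)\bigr),
\]
which equals $2\veps_i(b)$ only when $\veps_i(b)=0$ or $\vphi_i(b)=0$. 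Concretely, for $\geh_0=\mathfrak{sl}_2$ and $\la=2\Lab_1$, take $b=fu_\la$, so $\veps(b)=\vphi(b)=1$. Then $\veps(b\ot b)=\vphi(b\ot b)=1$, not $2$; moreover $f(b\ot b)=b\ot fb$ and $f^2(b\ot b)=0$, whereas $(fb)^{\ot 2}\ne 0$. So the diagonal is not stable under $f_i^m$ and does not exhibit the $m$-fold stretching of $i$-strings. Worse, $b^{\ot m}$ generally does not even lie in the connected component of $u_\la^{\ot m}$, which is the copy of $B(m\la)$ you are embedding into: in the same example $B(2\Lab_1)^{\ot 2}\simeq B(4\Lab_1)\oplus B(2\Lab_1)\oplus B(0)$, the element $fu_\la\ot fu_\la$ lies in the middle summand, while the correct value $S_2(fu_\la)=f^2u_{2\la}$ corresponds to $f^2u_\la\ot u_\la$. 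The image of $S_m$ is simply not the diagonal. (Your description is also internally inconsistent: if the first $m$ applications of $f_i$ all hit the rightmost factor, you land on $b^{\ot(m-1)}\ot f_i^mb$, not on $(f_ib)^{\ot m}$.)

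The parts of your argument that work are exactly the easy ones: uniqueness follows from connectedness of $B(\la)$ together with \eqref{sim1} and the fact that \eqref{sim2} forces $S_m(u_\la)=u_{m\la}$, and injectivity follows since \eqref{sim2} shows $S_m$ preserves the data $(\veps_i,\vphi_i)$ up to the factor $m$ along any path. But existence --- well-definedness of $f_{i_1}^m\cdots f_{i_k}^mu_{m\la}$ independently of the chosen path --- is the entire content of the theorem, and your mechanism for it collapses. Kashiwara's actual proof does not pass through $B(\la)^{\ot m}$; it reduces the problem to $B(\infty)$ and establishes compatibility of the candidate map with the elementary embeddings $B(\infty)\hookrightarrow B(\infty)\ot B_i$, which is where the genuine work lies. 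If you want to salvage a tensor-product picture, you would have to describe the correct (non-diagonal) image of $B(\la)$ inside $B(m\la)\subset B(\la)^{\ot m}$, and there is no way to write that down without already possessing $S_m$.
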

Note that \eqref{sim2} implies $\wt(S_m(b))=m\wt(b)$. Our second main theorem states that similar
properties hold also for KR crystals.

\begin{theorem} \label{th:similarity}
There exists a unique injective map
\[
S_m:B^{r,s}\longrightarrow B^{r,ms}
\]
satisfying \eqref{sim1} and \eqref{sim2} for $i\in I$ and $b\in B^{r,s}$.
\end{theorem}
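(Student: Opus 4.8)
The plan is to construct $S_m$ uniformly across the various type cases by exploiting the recursive presentations of KR crystals recalled in \S2, and then to verify uniqueness abstractly. First I would dispose of the easiest cases. For type $A_{n-1}^{(1)}$ the crystal $B^{r,s}\cong B(s\Lab_r)$ is a connected highest weight $I_0$-crystal, so Theorem \ref{th:Kashiwara} produces a similarity map $S_m$ for the operators $e_i,f_i$ with $i\in I_0$; it remains only to check compatibility with $e_0,f_0$, which is immediate because $e_0=\pr^{-1}\circ e_1\circ\pr$ and $f_0=\pr^{-1}\circ f_1\circ\pr$ and the promotion operator $\pr$ is realized by jeu-de-taquin, which commutes with the tableau-theoretic similarity map $S_m$ on rectangular tableaux. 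The same strategy handles the exceptional nodes, where $B^{r,s}\cong B(s\Lab_r)$ as an $I_0$-crystal and $e_0,f_0$ are given by the explicit combinatorial rules of \S\ref{subsec:excep} on $\pm$-diagrams/triples $(\ell_1,\ell_2,\ell_3)$; there one defines $S_m$ on the triple level by $(\ell_1,\ell_2,\ell_3)\mapsto(m\ell_1,m\ell_2,m\ell_3)$ (and correspondingly $s\mapsto ms$) and checks directly against the displayed formulas for $e_0,f_0$.

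Next I would treat the cases built recursively from an ambient crystal through a map $S$ satisfying exactly the relations \eqref{sim1}--\eqref{sim2} with multiplicities $(m_i)$. For type $B_n^{(1)}$ node $r=n$, and for types $A_{2n}^{(2)},D_{n+1}^{(2)}$, the KR crystal already sits inside an ambient crystal $\hat B$ by a similarity-type embedding $S$. The key observation is that such embeddings compose: if $S_m^{\hat B}$ is the similarity map on the ambient crystal (obtained inductively, ultimately reducing to $A_{n-1}^{(1)}$ or $C_n^{(1)}$), then the diagram relating the inclusion $S:B^{r,s}\hookrightarrow\hat B$ and $S:B^{r,ms}\hookrightarrow\hat B'$ through $S_m^{\hat B}$ can be made to commute, because all three maps scale the operators $e_i,f_i$ and the statistics $\veps_i,\vphi_i$ by prescribed integer factors, and the relevant factors multiply consistently. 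For type $C_n^{(1)}$ one uses the definitions $e_0=\hat e_0\hat e_1$, $f_0=\hat f_0\hat f_1$ in terms of the $A_{2n+1}^{(2)}$ operators, so that the similarity map on the $A_{2n+1}^{(2)}$ ambient crystal restricts to a similarity map on $B^{r,s}$; the compound operator $\hat e_0\hat e_1$ raised to the $m$th power must be matched against the scaled ambient operators, which requires checking that $\hat e_0$ and $\hat e_1$ interact correctly under the $m$-fold iteration.

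For the generic branching cases (types $B_n^{(1)}$, $D_n^{(1)}$, $A_{2n-1}^{(2)}$ at a nonexceptional node) I would build $S_m$ through the $I_0$-decomposition \eqref{decomp}: on each summand $B(\la)$ apply Kashiwara's $S_m$, matching $B(\la)\subset B^{r,s}$ with $B(m\la)\subset B^{r,ms}$, then verify compatibility with $e_0,f_0$. Since $e_0,f_0$ are defined via $\sigma$ by $e_0=\sigma\circ e_1\circ\sigma$ and $f_0=\sigma\circ f_1\circ\sigma$, and $\sigma$ is determined by the map $\mathfrak{S}$ on $\pm$-diagrams, the crux is that $\mathfrak{S}$ commutes with the column-scaling induced by $S_m$ on $\pm$-diagrams: scaling $s$ to $ms$ multiplies the column counts $c_i$ and the $\mp$-pair counts $p_i$ by $m$, and the rule ``change $p_i$ to $c_i-p_i$'' transforms under this scaling compatibly. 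I expect \emph{this} step---showing that $\sigma$ and hence $e_0,f_0$ intertwine with $S_m$ rather than merely $e_1,f_1$---to be the main obstacle, since $\sigma$ is not an $I_0$-crystal morphism and its action must be traced carefully through the $\pm$-diagram combinatorics. Finally, uniqueness of $S_m$ in every case follows from simplicity: by Theorem \ref{th:simple} each $B^{r,s}$ is connected, so $S_m$ is determined on all of $B^{r,s}$ once its value on a single element (e.g.\ the $I_0$-highest element of weight $s\Lab_r$) is fixed by \eqref{sim1}--\eqref{sim2}.
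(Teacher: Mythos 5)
Your proposal follows essentially the same route as the paper: uniqueness via the multiplicity-free classical decomposition (or connectedness), the promotion/jeu-de-taquin argument for type $A_{n-1}^{(1)}$, reduction to the commutation of $S_m$ with $\sigma$ on $\pm$-diagrams for types $B_n^{(1)},D_n^{(1)},A_{2n-1}^{(2)}$, the ambient-crystal argument for $C_n^{(1)}$, the commutative diagram through the virtual embedding for $A_{2n}^{(2)},D_{n+1}^{(2)}$, and the explicit triples for the exceptional nodes. The one step you flag as the main obstacle---that $S_m$ acts on $\pm$-diagrams by multiplying the multiplicities of $\mp,+,-,\cdot$ over each inner-column height by $m$, so that it commutes with $\mathfrak{S}$---is exactly what the paper settles by invoking Proposition 2.2 of \cite{OSa}.
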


\begin{proof}
Thanks to Theorem \ref{th:Kashiwara} the map $S_m$ is uniquely determined by \eqref{sim1} and 
\eqref{sim2} for $i\in I_0$, since $B^{r,s}$ is multiplicity free as $I_0$-crystal and $I_0$-highest
elements should be mapped to $I_0$-highest ones again. Hence it remains to show that the map $S_m$ so 
determined satisfies \eqref{sim1} and \eqref{sim2} for $i=0$. We check it case by case, treating the 
exceptional node case separately.

Type $A_n^{(1)}$: In the semistandard tableau representation the map $S_m$ is given by replacing 
each node having entry $a$ with $m$ nodes having entry $a$ concatinated horizontally. 
By the explicit combinatorial procedure of jeu de taquin, one can show $\pr$ commutes with $S_m$.
Then we have
\begin{align*}
S_m(e_0b)=&S_m((\pr^{-1}\circ e_1\circ\pr)(b))=(\pr^{-1}\circ e_1^m\circ\pr)S_m(b)=e_0^mS_m(b),\\
\veps_0(S_m(b))&=\veps_1(\pr(S_m(b)))=\veps_1(S_m(\pr(b)))=m\veps_1(\pr(b))=m\veps_0(b).
\end{align*}
The other relations are shown similarly.

Type $B_n^{(1)},D_n^{(1)},A_{2n-1}^{(2)}$: Similarly to $A_{n-1}^{(1)}$ case, it is enough to show
that $S_m$ commutes with the involution $\sigma$. Since $\sigma$ commutes with $e_i$ and $f_i$ 
($i=2,\ldots,n$), it is reduced to showing that $S_m$ commutes with $\sigma$ for any 
$\{2,\ldots,n\}$-highest element $b$. Let $b$ correspond to a $\pm$-diagram $P$. Then, by Proposition
2.2 of \cite{OSa} $S_m(b)$ corresponds to the $\pm$-diagram $P'$, where the number of $\mp,+,-,\cdot$ on
the columns of the inner shape of the same height is multiplied by $m$. Hence $S_m$ commutes with $\sigma$.
Note that it is valid also for $B^{n,s}$ of $B_n^{(n)}$.

Type $C_n^{(1)}$: We consider the inclusion $\iota:B^{r,s}\hookrightarrow\hat{B}^{r,s}$ in 
\S\ref{subsubsec:C} where $\hat{B}^{r,s}$ is the ambient KR crystal of type $A_{2n+1}^{(2)}$. Then we 
have 
\[
\xymatrix{B^{r,s}\ar[r]^\iota & \hat{B}^{r,s}\ar[r]^{S_m} & \hat{B}^{r,ms}}.
\]
Since $\sigma$ commutes with $S_m$ on $\hat{B}^{r,s}$, the image of the above composition is invariant
under $\sigma$. Hence it belongs to $B^{r,ms}$, thereby defining $S_m$ for $B^{r,s}$. Then we have 
\begin{align*}
S_m(e_0b)&=S_m(\hat{e}_0\hat{e}_1b)=\hat{e}_0^m\hat{e}_1^mS_m(b)=e_0^m(b),\\
\veps_0(S_m(b))&=\hat{\veps}_0(S_m(b))=m\hat{\veps}_0(b)=m\veps_0(b).
\end{align*}
Calculations for $f_0$ and $\vphi_0$ are similar.

Type $A_{2n}^{(2)},D_{n+1}^{(2)}$: Since $e_0$ ($f_0$) commutes with $e_i$ ($f_i$) ($i=2,\ldots,n$),
and similar relations for $f_0,\vphi_0$, for any $\{2,\ldots,n\}$-highest element $b$. Recall the 
construction in \S\ref{subsubsec:A(2)D(2)} and consider the following diagram
\[
\xymatrix{
B^{r,s}\ar[r]^S\ar@{.>}[d] & \hat{B}^{r,2s}\ar[d]^{\hat{S}_m} \\
B^{r,ms}\ar[r]^{S'} & \hat{B}^{r,2ms}}
\]
where $\hat{B}^{r,s}$ is the ambient $C_n^{(1)}$-KR crystal, $S,S'$ are the injective maps in 
\S\ref{subsubsec:A(2)D(2)} and $\hat{S}_m$ is the map just constructed for type $C_n^{(1)}$. For 
$b\in B^{r,s}$ the $\pm$-diagrams corresponding to $S(b)$ and $S'(b)$ both have even number of $\mp,+,-$
or $\cdot$ on the columns of the inner shape of the same height. Hence it is clear that there exists a 
map $S_m$ (broken line in the diagram) that makes the diagram commutative. Therefore the assertion 
follows from the properties of $\hat{S}_m$.

Exceptional nodes: Similarly to the previous case, it is enough to show the desired properties for any 
$\{2,\ldots,n\}$-highest element. However, it is clear from the formulas given in \S\ref{subsec:excep}.
\end{proof}

\section{Variations} \label{sec:var}

We give variations of Theorem \ref{th:similarity}. Since we treat KR crystals of different affine types,
we signify the type $\geh$ as $B^{r,s}_{\geh}$.

\begin{theorem}
(1) For each case below there is a unique injective map $S$ satisfying 
\begin{alignat*}{2}
&S(e_ib)=e_i^{m_i}S(b), & &S(f_ib)=f_i^{m_i}S(b), \\
&\veps_i(S(b))=m_i\veps_i(b), & \quad &\vphi_i(S(b))=m_i\vphi_i(b)
\end{alignat*}
for $i\in I$. We set $c_r=1\,(r\ne n),=2\,(r=n)$.

\begin{itemize}
\item[(i)] $S:B^{r,s}_{B_n^{(1)}}\longrightarrow B^{r,2s/c_r}_{A_{2n-1}^{(2)}}$ with
	$(m_i)_{i\in I}=(2,\ldots,2,1)$
\item[(ii)] $S:B^{r,s}_{C_n^{(1)}}\longrightarrow B^{r,s}_{A_{2n}^{(2)}}$ with
	$(m_i)_{i\in I}=(2,1,\ldots,1)$
\item[(iii)] $S:B^{r,s}_{C_n^{(1)}}\longrightarrow B^{r,c_rs}_{D_{n+1}^{(2)}}$ with
	$(m_i)_{i\in I}=(2,1,\ldots,1,2)$
\item[(iv)] $S:B^{r,s}_{A_{2n}^{(2)}}\longrightarrow B^{r,2s}_{C_n^{(1)}}\;(r\ne n)$ with
	$(m_i)_{i\in I}=(1,2,\ldots,2)$
\item[(v)] $S:B^{r,s}_{A_{2n}^{(2)}}\longrightarrow B^{r,s}_{D_{n+1}^{(2)}}\;(r\ne n)$ with
	$(m_i)_{i\in I}=(1,\ldots,1,2)$
\item[(vi)] $S:B^{r,s}_{A_{2n-1}^{(2)}}\longrightarrow B^{r,s}_{B_n^{(2)}}$ with
	$(m_i)_{i\in I}=(1,\ldots,1,2)$
\item[(vii)] $S:B^{r,s}_{D_{n+1}^{(2)}}\longrightarrow B^{r,2s/c_r}_{C_n^{(1)}}$ with
	$(m_i)_{i\in I}=(1,2,\ldots,2,1)$
\item[(viii)] $S:B^{r,s}_{D_{n+1}^{(2)}}\longrightarrow B^{r,2s/c_r}_{A_{2n}^{(2)}}$ with
	$(m_i)_{i\in I}=(2,\ldots,2,1)$
\end{itemize}

(2) Let $I,\hat{I}$ be the index set of the Dynkin diagram of $\geh,\hat{\geh}$. Let $\xi$ be a map from
$\hat{I}$ to $I$. For each case below then there is a unique injective map $S$ satisfying 
\begin{alignat*}{2}
&S(e_ib)=(\prod_{j\in\xi^{-1}(i)}\hat{e}_j)S(b),\quad& &S(f_ib)=(\prod_{j\in\xi^{-1}(i)}\hat{f}_j)S(b),\\
&\veps_i(b)=\hat{\veps}_j(S(b)),& &\vphi_i(b)=\hat{\vphi}_j(S(b))\quad\text{for any $j\in\xi^{-1}(i)$}
\end{alignat*}
for $i\in I$.

\begin{itemize}
\item[(i)] $S:B^{r,s}_{C_n^{(1)}}\longrightarrow B^{r,s}_{A_{2n+1}^{(2)}}\;(r\ne n),\;
	\xi(j)=\left\{\begin{array}{ll}
	0&(j=0,1)\\
	j-1&(2\le j\le n+1)
	\end{array}\right.$
\item[(ii)] $S:B^{r,s}_{A_{2n-1}^{(2)}}\longrightarrow 
	\left\{\begin{array}{ll}
	B^{r,s}_{D_{n+1}^{(1)}}&(r\ne n)\\
	B^{n,s}_{D_{n+1}^{(1)}}\ot B^{n+1,s}_{D_{n+1}^{(1)}}&(r=n)
	\end{array}\right.,\;
	\xi(j)=\left\{\begin{array}{ll}
	j&(0\le j\le n)\\
	n&(j=n+1)
	\end{array}\right.$
\item[(iii)] $S:B^{r,s}_{D_{n+1}^{(2)}}\longrightarrow 
	\left\{\begin{array}{ll}
	B^{r,s}_{A_{2n-1}^{(1)}}\ot B^{2n-r,s}_{A_{2n-1}^{(1)}}&(r\ne n)\\
	B^{n,s}_{A_{2n-1}^{(1)}}&(r=n)
	\end{array}\right.,\;
	\xi(j)=\left\{\begin{array}{ll}
	j&(0\le j\le n)\\
	2n-j&(n<j<2n)
	\end{array}\right.$
\end{itemize}
\end{theorem}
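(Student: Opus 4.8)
The plan is to exploit that, as reviewed in \S\ref{subsubsec:C} and \S\ref{subsubsec:A(2)D(2)}, several of the requested maps are already built into the construction of the KR crystals, and to obtain the rest by composing these structural maps with the intra-type similarity $S_m$ of Theorem \ref{th:similarity}. First I would record the \emph{definitional} cases, where there is nothing to prove beyond matching notation. Case (2-i) is exactly the inclusion $\iota:B^{r,s}_{C_n^{(1)}}\hookrightarrow\hat B^{r,s}_{A_{2n+1}^{(2)}}$ of \S\ref{subsubsec:C}, since there $e_0=\hat e_0\hat e_1$ and $e_i=\hat e_{i+1}$ translate into $\xi^{-1}(0)=\{0,1\}$ and $\xi(j)=j-1$. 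Likewise (1-iv) and (1-vii) for $r\ne n$ are the injections $S:B^{r,s}\to\hat B^{r,2s}_{C_n^{(1)}}$ of \S\ref{subsubsec:A(2)D(2)} with $(m_i)=(1,2,\dots,2)$ and $(1,2,\dots,2,1)$, and (1-i) for $r=n$ is the injection into $\hat B^{n,s}_{A_{2n-1}^{(2)}}$ from \S\ref{subsubsec:BDA}.

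Next I would set up the uniqueness and reduction common to every case, following the proof of Theorem \ref{th:similarity}. Because each $B^{r,s}$ is multiplicity free as an $I_0$-crystal, the classical part of $S$ (the relations for $i\in I_0$) is forced: when source and target share a classical type it is Kashiwara's similarity (Theorem \ref{th:Kashiwara}) with the necessarily constant $I_0$-scaling, and for the cross pairs $B_n\leftrightarrow C_n$ it is the Langlands-dual similarity of classical crystals, which doubles the directions of the long roots of the source, accounting for the shape of the tuples $(2,\dots,2,1)$ and $(1,\dots,1,2)$. This determines $S$ on $I_0$-highest elements and reduces every verification to the single relation at $i=0$ (together with the $\sigma$-relation at $i=1$ for the types of \S\ref{subsubsec:BDA}), which in turn need only be checked on $\{2,\dots,n\}$-highest elements, i.e.\ on $\pm$-diagrams or, for the exceptional nodes, on the triples of \S\ref{subsubsec:CD(2)excep} and \S\ref{subsubsec:Dexcep}.

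The bulk of the work is then to produce the non-definitional maps by composition and to check that each composite lands in the correct subcrystal. I would obtain (1-ii) by factoring $S_2$ on $B^{r,s}_{C_n^{(1)}}$ through the embedding (1-iv): the scalings multiply to the constant tuple $(2,\dots,2)$ and the $s$-parameter to $2s$, and $S_2$ produces $\pm$-diagram data divisible by $2$, hence lands in the image of $A_{2n}^{(2)}$. Similarly (1-iii) factors $S_2$ on $C_n^{(1)}$ through (1-vii); (1-v) factors (1-iv) through (1-vii) (as $\text{(1-vii)}\circ\text{(1-v)}=\text{(1-iv)}$ on scalings); (1-viii) is $\text{(1-ii)}\circ\text{(1-vii)}$; and the cross-type pair (1-i), (1-vi) for $r\ne n$ is handled directly by the $B_n/C_n$ classical similarity together with the $\sigma$-compatibility of \S\ref{subsubsec:BDA}. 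In each case the only genuine point is that the relevant $\pm$-diagram or triple data of the image satisfies the divisibility condition cutting out the target subcrystal; the node-$0$ formulas of \S\ref{subsec:excep} (for $r=n$) and the rule $e_0=\sigma e_1\sigma$ then make the checks mechanical.

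For part (2), the remaining maps (2-ii) and (2-iii) are virtualizations of the twisted types into the simply-laced untwisted types $D_{n+1}^{(1)}$ and $A_{2n-1}^{(1)}$, where $\xi$ folds the fork nodes $n,n+1$ of $D_{n+1}^{(1)}$, respectively the rotation $j\mapsto 2n-j$ of $A_{2n-1}^{(1)}$; the target degenerates to a tensor product $B^{n,s}\ot B^{n+1,s}$, resp.\ $B^{r,s}\ot B^{2n-r,s}$, exactly when $r$ lies in a two-element $\xi$-fiber. I would verify the combined relations $S(e_ib)=(\prod_{j\in\xi^{-1}(i)}\hat e_j)S(b)$ and the aligned conditions $\veps_i(b)=\hat\veps_j(S(b))$ for \emph{every} $j\in\xi^{-1}(i)$ on $\{2,\dots,n\}$-highest elements via the $\pm$-diagram model, the alignedness being precisely Conjecture 6.6 of \cite{OSS}, which follows once (2-i) is combined with (1-iii) or (1-v). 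I expect the main obstacle to be exactly this image-containment bookkeeping: proving that each factored or composed map has image inside the next subcrystal requires a uniform combinatorial description of images by divisibility of $\pm$-diagram and triple data, and the cross-classical-type cases and the exceptional $r=n$ cases must be re-examined against their own models from \S\ref{subsec:excep}; establishing alignedness in part (2) when the target collapses to a single factor is the most delicate point.
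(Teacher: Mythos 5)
Your route is genuinely different from the paper's. The paper disposes of the whole theorem in three sentences: for (1-ii) it observes that $I$ is covered by the two overlapping classical subdiagrams $I_0=\{1,\ldots,n\}$ and $\{0,1,\ldots,n-1\}$, and that on each of these Kashiwara's general similarity theorem \cite[Theorem 5.1]{Ka:1996} already supplies existence and uniqueness of a map with the prescribed $(m_i)$; it cites \cite{OSS,NS} for (2-iii); and it leaves the remaining cases "to the reader" as being similar. In particular the paper never performs a node-$0$ computation for part (1). You instead take the maps that are definitional (the inclusion $\iota$ of \S\ref{subsubsec:C} for (2-i), the virtualization maps of \S\ref{subsubsec:A(2)D(2)} for (1-iv) and (1-vii), the map of \S\ref{subsubsec:BDA} for (1-i) with $r=n$) and manufacture the rest by factoring $S_m$ of Theorem \ref{th:similarity} or one definitional map through another; the arithmetic of your factorizations ((1-ii) and (1-iii) as $S_2$ factored through (1-iv), resp.\ (1-vii); (1-v) as (1-iv) factored through (1-vii); (1-viii) as a composition) checks out against the stated $(m_i)$ and $s$-parameters. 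What your approach buys is that it reuses machinery the paper has already built and makes the maps explicit; what the paper's approach buys is that the two-subdiagram trick eliminates both the node-$0$ check and the image-containment question entirely.

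Two points in your write-up need attention. First, the image containment you flag as "the main obstacle" should not be left as bookkeeping on $\pm$-diagram divisibility, because for a factorization you need the image of the inner injection to be \emph{exactly} characterized, not merely contained in the divisible locus; the clean way to close this is via the $I_0$-decomposition: the classical components hit by, say, $S_2$ on $B^{r,s}_{C_n^{(1)}}$ form a subset of those hit by the injection of (1-iv) (domino-removable shapes versus box-removable shapes), and on each common component $B(2\la)$ both maps restrict to the unique classical similarity $B(\la)\to B(2\la)$ of Theorem \ref{th:Kashiwara}, hence have identical image there. Second, your sentence on part (2) inverts the logical order of the paper: the alignedness statement of \cite[Conjecture 6.6]{OSS} is a \emph{corollary} of combining (2-i) with (1-iii) or (1-v), not an input to proving (2-ii) and (2-iii); those two cases must be established independently (the paper simply cites \cite{OSS,NS} for (2-iii)), so you should either verify the aligned conditions directly on $\{2,\ldots,n\}$-highest elements or invoke those references rather than the conjecture.
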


For instance, (1-ii) can be shown by considering them as $\{0,1,\ldots,n-1\}$-crystals and applying
\cite[Theorem 5.1]{Ka:1996}. Proof of (2-iii) is given in \cite{OSS,NS}. Other cases are shown similarly
and left to the reader. We can also consider an inclusion from $A_{2n}^{(2)\dagger}$ or 
$A_{2n-1}^{(2)\dagger}$-KR crystals, whose labeling of Dynkin nodes is opposite from $A_{2n}^{(2)}$ or
$A_{2n-1}^{(2)}$, but we do not list them here.

\subsection*{Acknowledgements}
The author thanks Ghislain Fourier and Anne Schilling for an exciting collaboration in \cite{FOS}. 
He is partially supported by the Grants-in-Aid 
for Scientific Research No. 23340007 and No. 23654007 from JSPS.

\end{document}